\theoremstyle{plain}
\newtheorem{definition}{Definition}
\theoremstyle{plain}
\newtheorem{theorem}{Theorem}[section]
\theoremstyle{plain}
\theoremstyle{remark}
\newtheorem{remark}[theorem]{Remark}
\theoremstyle{plain}
\numberwithin{equation}{section}
\begin{document}

\title{Hypersurfaces of prescribed null expansion}

\author{Xiaoxiang Chai}
\address{Korea Institute for Advanced Study, Seoul 02455, South Korea}
\email{xxchai@kias.re.kr}

\begin{abstract}
  We study hypersurfaces with prescribed null expansion in an initial data
  set. We propose a notion of stability and prove a topology theorem.
  Eichmair's Perron approach toward the existence of marginally outer trapped
  surface adapts to the settings of hypersurfaces with prescribed null
  expansion with only minor modifications.
\end{abstract}

{\maketitle}

Gromov {\cite{gromov-metric-2018,gromov-four-2021}} studied stable
$\mu$-bubbles in the study of scalar curvature which leads to many interesting
results in positive scalar curvature. See {\cite{gromov-four-2021}}. The use
of stable $\mu$-bubble dates back to minimal surface techniques used by
Schoen-Yau settled the positive mass conjecture {\cite{schoen-proof-1979}} and
{\cite{schoen-existence-1979}}. The analog of minimal surfaces in initial data
sets is a \text{{\itshape{marginally outer trapped surface}}} (in short,
\text{{\itshape{MOTS}}}; See {\cite{andersson-local-2005}}). An initial data
set $(M, g, p)$ is a Riemannian manifold $(M, g)$ equipped with an extra
symmetric 2-form $p$. A MOTS arises boundaries of blow up sets of Jang
equation {\cite{schoen-proof-1981}}. It was applied to establish the spacetime
positive mass theorem {\cite{eichmair-spacetime-2016}}.

\begin{definition}
  Given a function $h$ on $(M, g)$, a hypersurface $\Sigma$ is called a
  hypersurface of prescribed null expansion if
  \begin{equation}
    \theta := H_{\Sigma} +\ensuremath{\operatorname{tr}}_{\Sigma} p = h
    \label{pne}
  \end{equation}
  where $h$ is a function on $(M, g)$ and $\theta$ is the null expansion.
\end{definition}

The case $h \equiv 0$ gives the definition of MOTS. Motivated by the stability
of MOTS {\cite{andersson-local-2005}} and the stability of the $\mu$-bubble,
we define the following.

\begin{definition}
  We say that $\Sigma$ is stable if there exists a vector field $X = \varphi
  \nu$ with nonzero $\varphi \geqslant 0$ such that
  \begin{equation}
    \delta_X (H +\ensuremath{\operatorname{tr}}_{\Sigma} p - h) \geqslant 0.
    \label{stability}
  \end{equation}
\end{definition}

It is desirable to find a physical interpretation of such hypersurfaces. By
{\cite[(10)]{andersson-jangs-2010}},
\begin{equation}
  \delta_X \theta = - \Delta \varphi + 2 \langle W, \nabla \varphi \rangle +
  (\ensuremath{\operatorname{div}}W - W + Q) \varphi - \tfrac{1}{2} \theta
  (\theta - 2\ensuremath{\operatorname{tr}}p)
\end{equation}
where $W$ is the vector field tangential to $\Sigma$ which is dual to $p (\nu,
\cdot)$ and
\begin{equation}
  Q = \tfrac{1}{2} R_{\Sigma} - \mu - J (\nu) - \tfrac{1}{2} |p_{\Sigma} +
  A|^2,
\end{equation}
and $\mu = \tfrac{1}{2} R_g - |p|_g^2 + (\ensuremath{\operatorname{tr}}_g
p)^2$, $J =\ensuremath{\operatorname{div}}_g p - \mathrm{d}
(\ensuremath{\operatorname{tr}}_g p)$, $A$ is the second fundamental form of
$\Sigma$ in $M$ and we write in short $\chi = p_{\Sigma} + A$ the null second
fundamental form. With $X (h) = \varphi \langle D h, \nu \rangle$ and $\theta
= h$, we have that
\begin{align}
& L \varphi \\
= & - \Delta \varphi + 2 \langle W, \nabla \varphi \rangle +
(\ensuremath{\operatorname{div}}W - W + Q) \varphi \\
& \quad - \tfrac{1}{2} (h^2 - 2 h\ensuremath{\operatorname{tr}}p + 2 \nu
(h)) \varphi .
\end{align}
Equivalently, $\Sigma$ is stable if the first eigenvalue to the eigen problem
$L \varphi = \lambda \varphi$ is real and nonnegative by Krein-Rutman theorem
(See {\cite{andersson-local-2005}}). Note that $h \equiv 0$ recovers the
stability operator of a MOTS and $p = 0$ recovers the stability operator of a
$\mu$-bubble.

\begin{remark}
  By requiring that $\partial M \neq \emptyset$ and $\Sigma$ does not change
  intersecting angle along the deformation induced by $X$, one can define a
  capillary version of \eqref{stability} (See {\cite{alaee-stable-2020}}).
  More generally, one can prescribe arbitrary angle on $\partial M$ as well,
  the boundary stability operator is no different from that of the case $p
  \equiv 0$. See {\cite{gromov-four-2021}} for a derivation of boundary
  operator for the case $p \equiv 0$.
\end{remark}

Assuming for some $h$ defined on $M$,
\begin{equation}
  \mu - |J| + \tfrac{1}{2} (\tfrac{n}{n - 1} h^2 - 2
  h\ensuremath{\operatorname{tr}}p - 2| D h|) \geqslant 0. \label{modfied dec}
\end{equation}
The condition \eqref{modfied dec} when $h \equiv 0$ is called
\text{{\itshape{dominant energy condition}}}. When $p \equiv 0$, it reduces to
the relaxed positivity of scalar curvature {\cite{gromov-four-2021}}. We have
the following theorem asserting the Yamabe type of a stable hypersurface of
null expansion $\theta = h$.

\begin{theorem}
  \label{topology}Assume that $\Sigma$ is a stable hypersurface of prescribed
  null expansion $h$ in an initial data set $(M, g, p)$ satisfying
  \eqref{modfied dec}, then $\Sigma$ is of positive Yamabe type unless
  $\Sigma$ is Ricci flat, $\chi^0$ vanishes and $\mu - |J| + \tfrac{1}{2}
  (\tfrac{n}{n - 1} h^2 - 2 h\ensuremath{\operatorname{tr}}p + 2 \nu (h))$
  vanishes along $\Sigma$.
\end{theorem}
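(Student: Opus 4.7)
The plan is to mimic the Galloway-Schoen symmetrization argument for stable MOTS, adapted to the prescribed-null-expansion operator $L$. By stability and the Krein-Rutman theorem noted in the excerpt, $L$ admits a positive principal eigenfunction $\varphi>0$ with $L\varphi=\lambda\varphi$, $\lambda\geqslant 0$. Setting $u=\log\varphi$ and dividing the eigenvalue equation by $\varphi$, one completes the square in the gradient terms to obtain
\begin{equation}
-\Delta u + \operatorname{div} W = \lambda + |\nabla u - W|^2 - Q + \tfrac{1}{2}(h^2-2h\operatorname{tr}p+2\nu(h)).
\end{equation}

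Multiplying by $\psi^2$ for arbitrary $\psi\in C^\infty(\Sigma)$, integrating by parts, and applying the Cauchy-Schwarz inequality $2\psi\langle\nabla\psi,\nabla u-W\rangle\leqslant |\nabla\psi|^2+\psi^2|\nabla u-W|^2$ causes the troublesome $|\nabla u-W|^2$ contributions to cancel. Substituting the expression for $Q$, decomposing $|\chi|^2=|\chi^0|^2+h^2/(n-1)$ via the trace identity $\operatorname{tr}\chi=\theta=h$, and applying $J(\nu)\geqslant -|J|$ together with $\nu(h)\geqslant -|Dh|$ and the hypothesis \eqref{modfied dec}, I arrive at
\begin{equation}
\int_\Sigma |\nabla\psi|^2+\tfrac{1}{2}R_\Sigma\psi^2 \geqslant \int_\Sigma\psi^2\bigl(\lambda + \tfrac{1}{2}|\chi^0|^2+\mathcal{E}\bigr) \geqslant 0,
\end{equation}
where $\mathcal{E} := \mu + J(\nu) + \tfrac{1}{2}(\tfrac{n}{n-1}h^2-2h\operatorname{tr}p+2\nu(h))$; the factor $n/(n-1)$ in \eqref{modfied dec} is precisely what is needed for the traceless part $\tfrac{1}{2}|\chi^0|^2$ to remain as the nonnegative residue.

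Let $\varphi_0>0$ be the principal eigenfunction of $-\Delta_\Sigma+\tfrac{1}{2}R_\Sigma$ with eigenvalue $\mu_0\geqslant 0$. Since $\tfrac{1}{2}\geqslant (n-3)/(4(n-2))$ for every $n\geqslant 3$, the conformal change $\tilde{g}=\varphi_0^{2/(n-2)}g$ yields $\tilde R_\Sigma \geqslant 0$ everywhere on $\Sigma$, with strict inequality somewhere unless $\mu_0=0$ and $\varphi_0$ is constant. In the strict case, $\Sigma$ is of positive Yamabe type. Otherwise $\varphi_0$ is constant, $R_\Sigma \equiv 0$, and the displayed inequality is an equality, forcing $\chi^0\equiv 0$ and $\mathcal{E}\equiv 0$. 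Since $\mathcal{E}\geqslant \mu-|J|+\tfrac{1}{2}(\tfrac{n}{n-1}h^2-2h\operatorname{tr}p+2\nu(h))\geqslant 0$, where the first inequality uses $J(\nu)\geqslant -|J|$ and the second combines $\nu(h)\geqslant -|Dh|$ with \eqref{modfied dec}, the vanishing of $\mathcal{E}$ pinches the middle quantity to zero, giving the combination stated in the theorem. Finally, Kazdan-Warner's classification says that a closed manifold admitting no metric of positive scalar curvature but admitting a scalar-flat metric must carry that scalar-flat metric as a Ricci-flat one, which delivers the asserted Ricci flatness of $(\Sigma,g)$.

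I expect this final upgrade from scalar flatness to Ricci flatness to be the most delicate step, as it relies on the Kazdan-Warner classification rather than the direct variational machinery. The rest is a fairly standard bookkeeping adaptation of Galloway-Schoen, whose only genuinely new ingredient over the MOTS case $h\equiv 0$ is the traceless decomposition that produces the factor $n/(n-1)$ matching the hypothesis \eqref{modfied dec}.
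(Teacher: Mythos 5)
Your argument is correct and is essentially the paper's proof: both rest on the Galloway--Schoen symmetrization of the non-self-adjoint stability operator, the trace decomposition $|\chi|^2=|\chi^0|^2+h^2/(n-1)$ that produces the coefficient $n/(n-1)$ matching \eqref{modfied dec}, a conformal deformation to a metric of nonnegative scalar curvature, and Bourguignon's result (via Kazdan--Warner) to upgrade scalar flatness to Ricci flatness in the borderline case. The only, immaterial, difference is that the paper conformally rescales by the principal eigenfunction of the full symmetrized operator $-\Delta+Q-\tfrac{1}{2}(h^2-2h\operatorname{tr}p+2\nu(h))$, whereas you first extract the quadratic-form inequality by the $u=\log\varphi$ substitution and then rescale by the eigenfunction of $-\Delta+\tfrac{1}{2}R_\Sigma$.
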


\begin{proof}[Proof of Theorem \ref{topology}]
  Consider the eigenvalue problem
  \begin{equation}
    - \Delta \varphi + [Q - \tfrac{1}{2} (h^2 - 2
    h\ensuremath{\operatorname{tr}}p + 2 \nu (h))] \varphi = \lambda \varphi .
  \end{equation}
  Since $\Sigma$ is stable, it follows the same lines as in
  {\cite{galloway-generalization-2006}} that the first eigenvalue $\lambda_1
  \geqslant 0$ and the associated eigenfunction $f$ can be chosen positive
  everywhere on $\Sigma$. Consider the metric $\tilde{h} = f^{\tfrac{2}{n -
  2}} h$, we have that the scalar curvature $\tilde{S}$ of $\Sigma$ with
  respect to $\tilde{h}$ is
\begin{align}
& \tilde{S} \\
= & f^{- \tfrac{n}{n - 2}} (- 2 \Delta f + S f + \tfrac{n - 1}{n - 2} f^{-
1} | \nabla f|^2) \label{conformal related S} \\
= & f^{- \tfrac{2}{n - 2}} [2 \lambda_1 + 2 (\mu + J (\nu)) \\
& \quad + (h^2 - 2 h\ensuremath{\operatorname{tr}}p + 2 \langle D h, \nu
\rangle) + | \chi |^2 + \tfrac{n - 1}{n - 2} f^{- 1} | \nabla f|^2]
\\
= & f^{- \tfrac{2}{n - 2}} [2 \lambda_1 + 2 (\mu + J (\nu)) \\
& \quad + (\tfrac{n}{n - 1} h^2 - 2 h\ensuremath{\operatorname{tr}}p + 2
\langle D h, \nu \rangle) + | \chi^0 |^2 + \tfrac{n - 1}{n - 2} f^{- 1} |
\nabla f|^2],
\end{align}
  where $\chi^0 = \chi - \tfrac{1}{n - 1} \theta g_{\Sigma}$ denotes the trace
  free part of the null second fundamental form.
  
  By the condition \eqref{modfied dec}, we have that $\tilde{S} \geqslant 0$.
  If $\tilde{S} > 0$ somewhere, then it is well know that $\Sigma$ carries a
  metric of strict positive scalar curvature. If $\tilde{S}$ vanishes
  identically, then $\lambda_1 = 0$, $\chi^0$ vanishes along $\Sigma$, $\theta
  = h$ along $\Sigma$,
  \[ 2 (\mu + J (\nu)) + (\tfrac{n}{n - 1} h^2 - 2
     h\ensuremath{\operatorname{tr}}p + 2 \langle D h, \nu \rangle) \]
  vanishes along $\Sigma$ and $f$ is a positive constant. We see from
  \eqref{conformal related S} that $(\Sigma, h)$ is of vanishing scalar
  curvature. By a result of Bourguinon (see {\cite{kazdan-prescribing-1975}}),
  $\Sigma$ carries a metric of positive scalar curvature unless $\Sigma$ is
  Ricci flat.
\end{proof}

Eichmair {\cite{eichmair-plateau-2009}} and Andersson-Metzger
{\cite{andersson-area-2009}} established the existence of a marginally outer
trapped surface in domains bounded by two boundaries with certain convexity
conditions. In addition, Eichmair obtained the solution to the Plateau problem
via Perron methods. We note that the Eichmair's approach works for surfaces of
prescribed null expansion with only minor modifications. In particular analogs
of Theorem 1.1 and Theorem 3.1 of {\cite{eichmair-plateau-2009}} could be
established via a study of the Jang equation,
\begin{equation}
  \ensuremath{\operatorname{div}}_g (\tfrac{D u}{\sqrt{1 + |D u|^2}}) + (g^{i
  j} - \tfrac{D^i u D^j u}{1 + |D u|^2}) p_{i j} - \tau u - h = 0,
\end{equation}
here $\tau \geqslant 0$ is a constant. The following theorem is an analog of
{\cite[Theorem 3.1]{eichmair-plateau-2009}}. We leave the proof and the
solution of the Plateau problem to interested readers.

\begin{theorem}
  Let $(M^n, g, p)$ be an initial data set and let $U$ be a connected bounded
  open subset with smooth embedded boundary $\partial U$. Assume that the
  boundary consists of two non-empty closed hypersurfaces $\partial_1 U$ and
  $\partial_2 U$ so that
  \begin{equation}
    H_{\partial_1 U} + (\ensuremath{\operatorname{tr}}_{\partial_1 U} p - h) >
    0 \text{ and } H_{\partial_2 U} -
    (\ensuremath{\operatorname{tr}}_{\partial_2 U} p - h) > 0,
  \end{equation}
  where the mean curvature is computed with respect to the normal pointing out
  of $U$. Then there exists a closed boundary $\Sigma^{n - 1}$ homologous to
  $\partial_1 U$ which is $C$-almost minimizing in $U$ for a constant $C = C
  (|p|_{C (\bar{U})}, |h|_{C (\bar{U})})$. (Hence, $\Sigma$ has a singular set
  of Hausdorff dimension less than $n - 8$ which satisfies \eqref{pne}
  distributionally. In particular, if $2 \leqslant n \leqslant 7$, $\Sigma$ is
  non-empty, closed and embedded.) If $3 \leqslant n \leqslant 7$ and if the
  data satisfies \eqref{modfied dec}, the $\Sigma$ has non-negative Yamabe
  type.
\end{theorem}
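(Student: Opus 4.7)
The plan is to follow Eichmair's Perron approach via the Jang equation, exactly as indicated in the paragraph preceding the statement. A hypersurface of prescribed null expansion will be detected as the horizontal projection of the cylindrical part of a blow-up of Jang graphs, just as a MOTS is detected in \cite{schoen-proof-1981} and \cite{eichmair-plateau-2009}.

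First I would install barriers at the two boundary components. The two inequalities in the statement are precisely the conditions needed to produce $+\infty$ and $-\infty$ barriers for the regularized Jang equation: the outer normal computation on $\partial_1 U$ and the inner normal computation on $\partial_2 U$ make each component a one-sided obstruction to bounded Jang solutions. For fixed $\tau > 0$, a Leray--Schauder continuity argument combined with the Schoen--Yau interior estimates yields a smooth solution $u_\tau : U \to \mathbb{R}$ of the regularized Jang equation with these blow-up boundary values, with interior gradient bounds depending only on $|p|_{C(\bar U)}$, $|h|_{C(\bar U)}$, and $\tau$.

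Next I would let $\tau \downarrow 0$ and analyze the graphs $G_\tau \subset U \times \mathbb{R}$. Their mean curvature in the product metric is pointwise controlled by $|p|_{C(\bar U)} + |h|_{C(\bar U)}$, so each $G_\tau$ is $C$-almost minimizing as a codimension-one integer current with $C = C(|p|_{C(\bar U)}, |h|_{C(\bar U)})$. After vertical translations and extraction of a flat limit, one obtains a limiting $C$-almost minimizing integer current whose cylindrical component projects to the desired $\Sigma \subset U$, homologous to $\partial_1 U$. Non-emptiness for $2 \leq n \leq 7$ follows because the trapping inequalities prevent the $u_\tau$ from remaining uniformly bounded on $U$. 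The first variation identity for area on the cylindrical piece, rewritten via the Jang operator, gives \eqref{pne} distributionally, and Federer--Simons regularity provides a singular set of Hausdorff dimension at most $n - 8$.

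To upgrade to the Yamabe conclusion I would first show that the produced $\Sigma$ is stable in the sense of \eqref{stability}: the outermost/Perron nature of the construction furnishes a nonzero nonnegative deformation $\varphi \nu$ with $\delta_{\varphi \nu}(H + \operatorname{tr}_\Sigma p - h) \geq 0$, following the argument of \cite{andersson-area-2009}. With \eqref{modfied dec} in force, Theorem \ref{topology} then forces $\Sigma$ to be of non-negative Yamabe type. The main obstacle is the $\tau \downarrow 0$ blow-up analysis: one must verify that the cylindrical component of the limit is nontrivial, that its projection is regular enough to support \eqref{pne} distributionally, and that the almost-minimizing constant $C$ is $\tau$-independent; this is precisely the technical heart of \cite[Section 3]{eichmair-plateau-2009} that must be reproduced with the inhomogeneous term $h$ present throughout.
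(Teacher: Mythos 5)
Your proposal follows exactly the route the paper indicates: the paper gives no proof of this theorem, stating only that Eichmair's Perron/Jang-equation method adapts with minor modifications via the regularized equation with the extra $-h$ and $-\tau u$ terms, and explicitly leaves the details to the reader. Your outline (boundary barriers from the two trapping inequalities, the $\tau \downarrow 0$ blow-up, the $C$-almost minimizing property of the Jang graphs, the cylindrical limit projecting to $\Sigma$, and stability feeding into Theorem \ref{topology} for the Yamabe conclusion) is a faithful and correctly ordered sketch of that same argument, at a level of detail exceeding what the paper itself provides.
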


\text{{\bfseries{Acknowledgment}}} Research of Xiaoxiang Chai is supported by
KIAS Grants under the research code MG074402.

\end{document}